\newcommand\blfootnote[1]{%
  \begingroup
  \renewcommand\thefootnote{}\footnote{#1}%
  \addtocounter{footnote}{-1}%
  \endgroup
}
\newtheorem{theorem}{Theorem}[section]
\newtheorem{prop}[theorem]{Proposition}
\newtheorem{sejt}[theorem]{Conjecture}
\newtheorem{lemma}[theorem]{Lemma}
\DeclareMathOperator{\aut}{Aut}
\newcommand{\D}{\mathcal{D}}
\newcommand{\C}{\mathcal{C}}
\title{On the Automorphism Group of the Substructure Ordering of Finite Directed Graphs}
\author{Fanni K. Nedényi \ and \ Ádám Kunos}
\date{}
\tikzset{My Style/.style={circle,black, draw=black, fill=black, minimum size=0.1cm,inner sep=2pt}}
\begin{document}

\maketitle

\begin{abstract}
    We investigate the automorphism group of the substructure ordering of finite directed graphs. 
    The second author conjectured that it is isomorphic to the 768-element group $(\mathbb{Z}_2^4 \times S_4)\rtimes_{\alpha} \mathbb{Z}_2$. 
    Though unable to prove it, we solidify this conjecture by showing that the automorphism group behaves as expected by the conjecture on the first few levels of the poset in question.
    With the use of computer calculation we analyze the first four levels holding 3160 directed graphs.
\end{abstract}

\section{Introduction}

\blfootnote{\it 2020 Mathematics Subject Classifications: 06A06, 06A07, 68V05.}
\blfootnote{\it Key words and phrases: directed graph, digraph, partially ordered set, poset, automorphism group.}
\blfootnote{
Fanni K. Nedényi: {\it nfanni@math.u-szeged.hu},
ELKH-SZTE Analysis and Applications Research Group,
   Bolyai Institute, University of Szeged,
   Aradi v\'ertan\'uk tere 1, H--6720 Szeged, Hungary.}
\blfootnote{Ádám Kunos: {\it akunos@math.u-szeged.hu},
Bolyai Institute, University of Szeged, Aradi v\'ertan\'uk tere 1, H-6720 Szeged, Hungary.}
\blfootnote{The research was partially supported by the project TKP2021-NVA-09 from the National Research, Development and Innovation Fund of the Ministry of Innovation and Technology, Hungary.} 
\blfootnote{The second author was partially supported by the National Research, Development and Innovation Office (Hungary), Grants K128042 and K138892.}

In this paper, we are investigating the automorphism group of a concrete, infinite partially ordered set. 
As usual, a {\it directed graph}, or {\it digraph} for short, is a nonempty set (i. e. the set of vertices) equipped with a binary relation (i. e. the set of edges).
For a digraph $G$ let $V(G)$ denote its set of vertices.
Let $\mathcal{D}$ denote the set of the isomorphism types of finite directed graphs.
For $G, H \in \mathcal{D}$, let $G\sqsubseteq H$ stand for the fact that $G$ is (isomorphic to) a spanned subgraph of $H$. 
Then $(\mathcal{D}; \sqsubseteq)$ is clearly a partially ordered set, shorty {\it poset}. 
Figure \ref{41511} shows the first two levels of the poset $(\mathcal{D}, \sqsubseteq)$.

\begin{figure}%[h]
\begin{center}
\begin{tikzpicture}[line cap=round,line join=round,>=Latex, x=.49cm,y=.7cm]
\clip(-0.01,.95) rectangle (30,6.8);

%BOXES, separated by empty lines
\draw (0,5)-- (2,5);
\draw (2,5)-- (2,6);
\draw (2,6)-- (0,6);
\draw (0,6)-- (0,5);
\fill [color=black] (0.5,5.5) circle (1.5pt);
\fill [color=black] (1.5,5.5) circle (1.5pt);
\draw (1,5)-- (11.5, 2);
\draw[color=black] (1,6.5) node {$E$};

\draw (2.5,5)-- (4.5,5);
\draw (4.5,5)-- (4.5,6);
\draw (4.5,6)-- (2.5,6);
\draw (2.5,6)-- (2.5,5);
\fill [color=black] (3,5.5) circle (1.5pt);
\fill [color=black] (4,5.5) circle (1.5pt);
\draw [->] (3,5.5) -- (4,5.5);
\draw (3.5,5)-- (11.5, 2);
\draw[color=black] (3.5,6.5) node {$P$};

\draw (5,5)-- (7,5);
\draw (7,5)-- (7,6);
\draw (7,6)-- (5,6);
\draw (5,6)-- (5,5);
\fill [color=black] (5.5,5.5) circle (1.5pt);
\fill [color=black] (6.5,5.5) circle (1.5pt);
\draw [->] (5.5,5.5) ..controls(6, 5.7)..  (6.5,5.5);
\draw [->] (6.5,5.5) ..controls(6, 5.3)..  (5.5,5.5);
\draw (6,5)-- (11.5, 2);
\draw[color=black] (6,6.5) node {$E'$};

\draw (7.5,5)-- (9.5,5);
\draw (9.5,5)-- (9.5,6);
\draw (9.5,6)-- (7.5,6);
\draw (7.5,6)-- (7.5,5);
\fill [color=black] (8,5.5) circle (1.5pt);
\draw[rotate around={-60:(8,5.5)}] [->] (8,5.5) arc (360:10:3pt);
\fill [color=black] (9,5.5) circle (1.5pt);
\draw (8.5,5)-- (11.5, 2);
\draw (8.5,5)-- (13, 2);
\draw[color=black] (8.5,6.5) node {$A$};

\draw (10,5)-- (12,5);
\draw (12,5)-- (12,6);
\draw (12,6)-- (10,6);
\draw (10,6)-- (10,5);
\fill [color=black] (10.5,5.5) circle (1.5pt);
\draw[rotate around={-60:(10.5,5.5)}] [->] (10.5,5.5) arc (360:10:3pt);
\fill [color=black] (11.5,5.5) circle (1.5pt);
\draw [->] (10.5,5.5) -- (11.5,5.5);
\draw (11,5)-- (11.5, 2);
\draw (11,5)-- (13, 2);
\draw[color=black] (11,6.5) node {$B$};

\draw (12.5,5)-- (14.5,5);
\draw (14.5,5)-- (14.5,6);
\draw (14.5,6)-- (12.5,6);
\draw (12.5,6)-- (12.5,5);
\fill [color=black] (13,5.5) circle (1.5pt);
\draw[rotate around={-45:(13,5.5)}] [->] (13,5.5) arc (360:10:3pt);
\fill [color=black] (14,5.5) circle (1.5pt);
\draw [->] (14,5.5) -- (13,5.5);
\draw (13.5,5)-- (11.5, 2);
\draw (13.5,5)-- (13, 2);
\draw[color=black] (13.5,6.5) node {$C$};

\draw (15,5)-- (17,5);
\draw (17,5)-- (17,6);
\draw (17,6)-- (15,6);
\draw (15,6)-- (15,5);
\fill [color=black] (15.5,5.5) circle (1.5pt);
\draw[rotate around={-40:(15.5,5.5)}] [->] (15.5,5.5) arc (360:10:3pt);
\fill [color=black] (16.5,5.5) circle (1.5pt);
%\draw[rotate around={-90:(16.5,5.5)}] [->] (16.5,5.5) arc (360:10:3pt);
\draw [->] (15.5,5.5) ..controls(16, 5.7)..  (16.5,5.5);
\draw [->] (16.5,5.5) ..controls(16, 5.3)..  (15.5,5.5);
\draw (16,5)-- (11.5, 2);
\draw (16,5)-- (13, 2);
\draw[color=black] (16,6.5) node {$D$};

\draw (17.5,5)-- (19.5,5);
\draw (19.5,5)-- (19.5,6);
\draw (19.5,6)-- (17.5,6);
\draw (17.5,6)-- (17.5,5);
\fill [color=black] (18,5.5) circle (1.5pt);
\draw[rotate around={-90:(18,5.5)}] [->] (18,5.5) arc (360:10:3pt);
\fill [color=black] (19,5.5) circle (1.5pt);
\draw[rotate around={-90:(19,5.5)}] [->] (19,5.5) arc (360:10:3pt);
\draw (18.5,5)-- (13, 2);
\draw[color=black] (18.5,6.5) node {$L$};

\draw (20,5)-- (22,5);
\draw (22,5)-- (22,6);
\draw (22,6)-- (20,6);
\draw (20,6)-- (20,5);
\fill [color=black] (20.5,5.5) circle (1.5pt);
\draw[rotate around={-40:(20.5,5.5)}] [->] (20.5,5.5) arc (360:10:3pt);
\fill [color=black] (21.5,5.5) circle (1.5pt);
\draw[rotate around={-90:(21.5,5.5)}] [->] (21.5,5.5) arc (360:10:3pt);
\draw [->] (20.5,5.5) -- (21.5,5.5);
\draw (21,5)-- (13, 2);
\draw[color=black] (21,6.5) node {$Q$};

\draw (22.5,5)-- (24.5,5);
\draw (24.5,5)-- (24.5,6);
\draw (24.5,6)-- (22.5,6);
\draw (22.5,6)-- (22.5,5);
\fill [color=black] (23,5.5) circle (1.5pt);
\draw[rotate around={-40:(23,5.5)}] [->] (23,5.5) arc (360:10:3pt);
\fill [color=black] (24,5.5) circle (1.5pt);
\draw[rotate around={-90:(24,5.5)}] [->] (24,5.5) arc (360:10:3pt);
\draw [->] (23,5.5) ..controls(23.5, 5.7)..  (24,5.5);
\draw [->] (24,5.5) ..controls(23.5, 5.3)..  (23,5.5);
%\draw (23.5,5)-- (11.5, 2);
\draw (23.5,5)-- (13, 2);
\draw[color=black] (23.5,6.5) node {$L'$};

%alsok
\draw (12,1)-- (12,2);
\draw (12,2)-- (11,2);
\draw (11,2)-- (11,1);
\draw (11,1)-- (12,1);
\fill [color=black] (11.5,1.5) circle (1.5pt);
%innen kell a vonalat: (11.5, 2)

\draw (13.5,1)-- (13.5,2);
\draw (13.5,2)-- (12.5,2);
\draw (12.5,2)-- (12.5,1);
\draw (12.5,1)-- (13.5,1);
\fill [color=black] (13,1.3) circle (1.5pt);
\draw[rotate around={-90:(13,1.3)}] [->] (13,1.3) arc (360:10:4pt);
%innen kell a vonalat: (13, 2)

\end{tikzpicture}
\end{center}
\caption{The initial segment of the Hasse diagram of the substructure ordering of digraphs, that is $(\mathcal{D}; \sqsubseteq)$. }
\label{41511}
\end{figure}
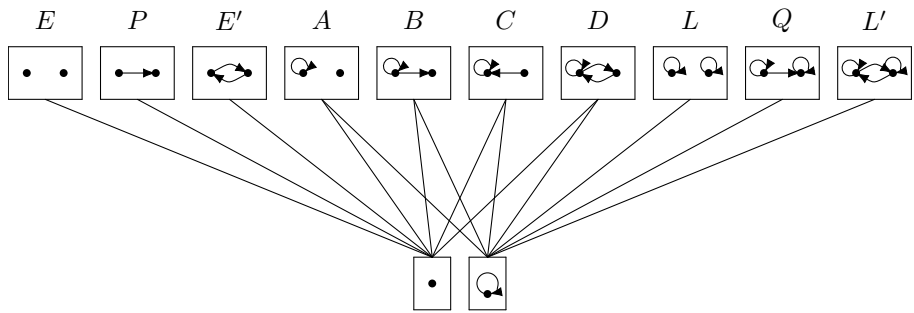

In this paper, we are investigating the automorphism group $\aut\mathcal{D}$ of the poset $\mathcal{D}$.
In \cite{Ksubstructure}, the second author showed 768 different automorphisms, and conjectured there is no more (see Conjecture \ref{conjecture} in the present paper for details).
The conjecture was quite baseless in the sense that it only conveyed the author's feelings.
In this paper, we attack the conjecture using some computer calculation, and, though unable to prove it, we give it a strong basis---proving the automorphism group acts on the first three levels of $\mathcal{D}$ in accordance with the conjecture.

This conjecture is interesting because it breaks a pattern that has been established in a series of papers. 
In 2009--2010 Jaroslav Je\v{z}ek and Ralph McKenzie published a series of papers \cite{Jezek2009_1, Jezek2010, Jezek2009_3, Jezek2009_4} in which they examined similar orderings, determined the automorphism groups of them (among other things). 
Instead of directed graphs, they considered finite semilattices \cite{Jezek2009_1}, ordered sets \cite{Jezek2010}, distributive lattices \cite{Jezek2009_3} and lattices \cite{Jezek2009_4}---but the main concept was identical.
A little later, Alexander Wires and the second author continued the series with three papers \cite{Wires2016, Kunos2015, Kunos2019}, investigating simple graphs and directed graphs, respectively. 
Note that for directed graphs, in the papers \cite{Kunos2015, Kunos2019}, the second author considered an ordering that is different from the one being investigated in the current paper. 
All these papers displayed a pattern: the corresponding automorphism groups turned out to be isomorphic to either the trivial or two element group.
The phenomenon that nontrivial automorphisms enter the picture had been unprecedented before \cite{Ksubstructure}.
For this reason, the usual line of arguments fails, leaving the automorphism group unsettled.
Something new is required.

%In \cite{Ksubstructure}, it is established that some finite calculation could possibly lead to the proof of the conjecture.
%Unfortunately, we do not have enough computational capacity to settle the conjecture, all we do is make a vague conjecture solid.
%In fact, we prove that the automorphism group acts on the first three levels of our poset in accordance with our conjecture.

A posed is called {\it graded} if it has levels in it, the natural way.
For a graded poset $\mathcal{P}$, let $\mathcal{P}_n$ denote the subposet of the first $n$ levels. 
It is clear that $\mathcal{P}_n$ is invariant under $\aut \mathcal{P}$.
Let $\varphi|_n$ denote the restriction of $\varphi\in \aut \mathcal{P}$ to $\mathcal{P}_n$.
Furthermore, let $\aut_n\mathcal{P}$ denote $\{\varphi|_n \in \aut\mathcal{P}_n : \varphi \in \aut \mathcal{P}\}$, which is clearly a subgroup of $\aut\mathcal{P}_n$.

Observe that our poset $(\D; \sqsubseteq)$ is graded and $\D_n=\{G\in\D : |V(G)| \leq n\}$.
In \cite{Ksubstructure}, though not stated in this way, it was shown that 

\begin{prop}\label{43426869}
For $\varphi, \varphi' \in \aut\D$, $\varphi|_{12}=\varphi'|_{12}$ implies $\varphi=\varphi'$.
\end{prop}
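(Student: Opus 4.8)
\emph{Proof proposal.} The statement is equivalent to the injectivity of the restriction homomorphism $\aut\D\to\aut\D_{12}$, $\varphi\mapsto\varphi|_{12}$. Setting $\psi:=\varphi^{-1}\circ\varphi'$, it suffices to show that the identity is the only $\psi\in\aut\D$ fixing $\D_{12}$ pointwise. The plan is to deduce this from something stronger: that \emph{every} element of $\D$ is first-order definable in $(\D;\sqsubseteq)$ using only parameters from $\D_{12}$. Granting that, any $\psi$ fixing all such parameters — in particular any $\psi$ fixing $\D_{12}$ pointwise — fixes every element of $\D$, and then $\varphi|_{12}=\varphi'|_{12}$ forces $\varphi=\varphi'$.

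We would first record the easy structural facts. Every automorphism of $\D$ preserves the grading: $\D_n$ is order-theoretically definable ($\D_1$ is the set of minimal elements, and $\D_{n+1}$ is $\D_n$ together with the minimal elements of the poset left after $\D_n$ is deleted), so the vertex count is the rank function of the graded poset $\D$ and is preserved by every automorphism. Hence a $\psi$ fixing $\D_{12}$ pointwise maps each higher level onto itself, and for a digraph $H$ with $|V(H)|=n+1$ it carries the set of lower covers of $H$ — which is exactly the set of isomorphism types of the vertex-deleted digraphs $H-v$, since nothing can lie strictly between $H-v$ and $H$, their vertex counts differing by one — onto the corresponding set for $\psi(H)$.

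The definability itself would be proved by induction on $|V(H)|$: for $|V(H)|\le 12$ the digraph $H$ is an available parameter, and for $|V(H)|=m>12$ one must isolate $H$, among the finitely many digraphs on $m$ vertices, by an order-theoretic condition referring only to the already-definable digraphs of smaller size. This is the crux and, we expect, by far the hardest step. The obvious first attempt — that $H$ is the unique digraph of its size with a prescribed deck (its set of cards) — is false: there exist non-isomorphic digraphs with identical card sets, and such pairs occur at arbitrarily large sizes, so no threshold on $|V(H)|$ can make this work. One is therefore forced to use more of the poset: the way a digraph embeds into the digraphs one level up (its upper covers), together with a carefully engineered scaffolding of definable \emph{landmark} digraphs — discrete digraphs, disjoint unions of the small digraphs $P$, $E'$, $L$, $\dots$ of Figure \ref{41511}, and suitable one-point extensions of them — through which an arbitrary $H$ can be pinned down by an explicit formula. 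The two delicate points are (i) verifying that $\D_{12}$ is already rich enough to anchor every landmark the construction needs, so that the induction starts at the right level, and (ii) producing, for each $H$, a defining formula having $H$ as its unique solution despite the non-reconstructibility phenomena. With (i) and (ii) in hand the definability follows, and with it the Proposition; this is, in reorganized form, the content of the relevant portion of \cite{Ksubstructure}.
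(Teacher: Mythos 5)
Your proposal follows essentially the same route as the paper: the Proposition is reduced to the statement that every digraph is first-order definable in $(\D;\sqsubseteq)$ with parameters from $\D_{12}$, which is exactly Corollary~2 of \cite{Ksubstructure} (with the paper noting that the parameters there lie in the first $12$ levels, after correcting the formula $(p+1)q$ to $(p+1)(q+1)$). Your added sketch of how the definability itself would go is speculative but you correctly flag it as the deferred crux, which matches the paper's treatment of citing \cite{Ksubstructure} rather than reproving it.
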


Unfortunately, it is not straightforward to extract this proposition from \cite{Ksubstructure}.
Hence an explanation is due, but as it would take us off our main course here, this explanation is put at the end of this section, in a standalone subsection.

Proposition \ref{43426869} yields that it is enough to determine $\aut_{12}\D$.
This is what makes our problem finite---at least ostensibly.
Though the determination of $\aut_{12}\D$ is not a finite problem, the determination of $\aut\D_{12}$ is.
Let $\mathcal{C}$ denote the group of our conjecture. 
(For now, we don't need to know its exact structure, $\mathcal{C}$ is just a notation. 
But if interested, see Conjecture \ref{conjecture}).
At first sight, it is natural to expect $\aut\D_n$ to be isomorphic to $\C$ for a big enough $n$.
One might expect $n=12$ to be big enough---meaning $\aut\D_{12}\cong \C$.
This, together with $\aut_{12}\D\leq \aut\D_{12}$, would yield our conjecture as in Section \ref{automorphisms} we show a subgroup of the automorphism group isomorphic to $\C$ that does not collapse when restricted to the first three levels (not to mention the first twelve). 
However, the unfortunate fact is that we cannot expect $\aut\D_n$ to be isomorphic to $\C$ at all.
The reason for this is actually the statement of Lemma \ref{extension}. 
After that lemma, as an application, we show that $|\aut\D_3|\approx 1.67\cdot 10^{13}$ which is quite far from the desired 768.
%Consequently, it will be of no use for us to calculate $\aut\D_{12}$.
What can we do then?
Let us consider the following chain of subgroups of $\aut\D_{12}$:
$$ \aut\D_{12} \geq \aut_{12}\D_{13} \geq \aut_{12}\D_{14} \geq\dots $$
All elements of this chain contain the sought $\aut_{12}\D$ as a subgroup and it is clearly a finite problem to determine each of them. 
What's more is that we can actually expect this chain to stabilize at the yearned $\aut_{12}\D$.
What we prove in this paper is that (even though $|\aut\D_3|\approx 1.67\cdot 10^{13}$) the chain
$$ \aut\D_{3} \geq \aut_{3}\D_{4} \geq \aut_{3}\D_{5} \geq \dots $$
stabilizes already at its second element as the following theorem holds.

\begin{theorem}\label{982thss}
$\aut_3\D =\aut_3\mathcal{D}_4 \cong \C$.
\end{theorem}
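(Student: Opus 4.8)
The plan is to squeeze both $\aut_{3}\D$ and $\aut_{3}\D_{4}$ between $\C$ and $768$. Let $\C^{*}\le\aut\D$ be the $768$-element subgroup produced in Section~\ref{automorphisms}; it is shown there that restriction to the first three levels does not collapse $\C^{*}$, so $\C\cong\C^{*}|_{3}$. Any $\varphi\in\aut\D$ and any $\chi\in\aut\D_{4}$ respects the grading and hence fixes $\D_{4}$, respectively $\D_{3}$, setwise; restricting therefore gives
\[
\C\ \cong\ \C^{*}|_{3}\ \le\ \aut_{3}\D\ \le\ \aut_{3}\D_{4}\ \le\ \aut\D_{3}.
\]
So it suffices to prove the single bound $|\aut_{3}\D_{4}|\le 768$. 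Indeed, this would force $768=|\C^{*}|_{3}|\le|\aut_{3}\D|\le|\aut_{3}\D_{4}|\le768$, so all of these finite groups have order $768$ and, being nested, coincide: $\aut_{3}\D=\aut_{3}\D_{4}=\C^{*}|_{3}\cong\C$.

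To bound $|\aut_{3}\D_{4}|$ I would first reinterpret it combinatorially. Because $\D$ is graded and an induced-subgraph containment of digraphs can always be realised by deleting vertices one at a time, the order on $\D_{4}$ is generated by its covering relation; an automorphism of $\D_{4}$ is therefore exactly a level-preserving bijection whose restriction to $\D_{3}$ belongs to $\aut\D_{3}$ and which is an automorphism of the bipartite ``cover'' graph $B$ between the $104$ three-vertex digraphs and the $3044$ four-vertex digraphs. Hence
\[
\aut_{3}\D_{4}=\bigl\{\psi\in\aut\D_{3}\ :\ \psi|_{\text{level }3}\ \text{extends to an automorphism of}\ B\bigr\}.
\]
I would have the computer build $\D_{3}$, $\D_{4}$ and $B$, and then exploit the structure of $\aut\D_{3}$: such an automorphism is determined by its action on levels $1$ and $2$ together with, for each admissible such action, a down-set-preserving permutation of the $104$ maximal elements, the latter being the product of symmetric groups responsible for the estimate $|\aut\D_{3}|\approx1.67\cdot10^{13}$ following Lemma~\ref{extension}. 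Requiring extendibility to $B$ refines the down-set partition of level $3$ by the up-set (i.e.\ $B$-neighbourhood) partition, and the task reduces to counting those $\psi$ that, after this refinement, still admit a level-$4$ permutation matching them across $B$.

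The main obstacle is to carry out this count rigorously and feasibly, without ever enumerating the $\approx10^{13}$ elements of $\aut\D_{3}$. The way through is to use Lemma~\ref{extension} and its proof to parametrise $\aut\D_{3}$ block-by-block, and to observe that checking whether a candidate level-$3$ permutation lifts across $B$ is a bipartite graph-isomorphism search that decomposes along the blocks of the refined partition; with the level-$\le2$ part pinned down, backtracking prunes almost everything, since a ``free'' transposition of two maximal elements of $\D_{3}$ survives only if the corresponding four-vertex extensions sit in matching positions of $B$. One must also record the (immediate but necessary) remark that extending to an automorphism of $B$ is equivalent to extending to an automorphism of $\D_{4}$, the grading ruling out the breaking of any order relation between levels $\le2$ and level $4$. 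Once the search certifies that precisely $768$ level-$3$ permutations survive, this set must equal $\C^{*}|_{3}$, which we already know lies inside it and has order $768$, completing the proof.
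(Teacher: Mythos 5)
Your overall reduction coincides with the paper's: the chain $\C \cong \C^*|_3 \le \aut_3\D \le \aut_3\D_4$, with the lower bound coming from the fact that the 768-element subgroup of Section~\ref{automorphisms} does not collapse on the first three levels, correctly reduces everything to the single inequality $|\aut_3\D_4|\le 768$. The gap is in how you propose to establish that bound. You defer it to a backtracking search over a block-by-block parametrisation of $\aut\D_3$, asserting that the search ``prunes almost everything''; but feasibility is precisely the obstacle here, since $|\aut\D_3|\approx 1.67\cdot 10^{13}$, and you never exhibit the concrete invariants that would make the pruning effective and certifiable. Worse, the one pruning device you do name -- refining the down-set partition of level $3$ by the up-set ($B$-neighbourhood) partition -- is not a pointwise constraint on a candidate $\psi$: the lower covers live in levels $1$--$2$, which $\psi$ may move, and the upper covers live in level $4$, which the putative extension moves as well, so $\psi$ only permutes the blocks of these partitions rather than preserving them. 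Only block-level data such as $|u(G)|$ is an honest invariant at this stage.

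The paper closes exactly this gap in two steps you would need to supply. First, an orbit--stabilizer reduction: the restriction of $\aut_3\D_4$ to the first two levels lands in the 192-element group $\aut_2\D_3$, so it suffices to show that at most $4$ elements of $\aut_3\D_4$ fix the first two levels pointwise. For such a $\varphi$ the conditions $l(\varphi(G))=l(G)$ and $|u(\varphi(G))|=|u(G)|$ do become legitimate pointwise constraints, and a computer check shows the resulting equivalence $\alpha\cap\beta$ has only $20$ non-singleton classes, all of size two. Second, two of these classes are genuinely free (they carry $\varphi_4$ and $\varphi_5$), and the remaining $18$ are eliminated by explicit necessary conditions built from the common upper covers with the two distinguished digraphs $V$ and $W$ and from the lower-cover counts of those common upper covers (Conditions 1A--3B). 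Without some such explicit, finite list of invariants and the verification that they pin down all but two free transpositions, ``the search certifies that precisely $768$ permutations survive'' is a hope rather than a proof.
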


This theorem solidifies the conjecture to a great extent, even though far from being a proof of it.

\subsection{On Proposition \ref{43426869}}

The paper \cite{Ksubstructure} deals mainly with first-order definability (in the poset $(\mathcal{D}; \sqsubseteq)$), just as its predecessors \cite{Jezek2009_1, Jezek2010, Jezek2009_3, Jezek2009_4, Wires2016, Kunos2015, Kunos2019} did. 
It was always the definability results that one could derive the automorphism groups from---as easy corollaries.
The connection is not so one-way though.
Automorphisms obstruct definability results in a quite natural way.
For example, for an automorphism $\varphi\in \mathcal{D}$, the digraphs $G$ and $\varphi(G)$ are not distinguishable by first-order formulas as those can only capture structural properties of the poset $(\mathcal{D}; \sqsubseteq)$, and, there, $G$ and $\varphi(G)$ clearly share all those (considered just as elements of $(\mathcal{D}; \sqsubseteq)$).

One definability result, namely Corollary 2 of \cite{Ksubstructure} says that after adding a finite number of constants from $\mathcal{D}$ to the first order language of posets, every single digraph becomes first-order definable.
For the automorphism group, this means that after fixing those constants, there remains no other automorphism besides the trivial one.
If we look at the proof, the constants used are from the first 12 levels of $(\mathcal{D}; \sqsubseteq)$.
This is where Proposition \ref{43426869} comes from.

We have to remark here that, at the very end of the paper \cite{Ksubstructure}, there is a discussion on the constants used and there is an error in it.
It is said there that our constants come from the first 9 levels, where the number 9 comes from a formula, $(p+1)q$, with the substitution $p=2$ and $q=3$.
The formula is actually wrong, it should be $(p+1)(q+1)$, and therefore the correct number is $(2+1)(3+1)=12$.
The formula itself is based on a very basic combinatorial argument that one can verify easily (maybe that is the reason it was not given enough attention and got overlooked by the second author at the time).

\section{768 Distinct Automorphisms} \label{automorphisms}

Automorphisms map digraphs to digraphs in $\mathcal{D}$.
To define an automorphism $\varphi$, we need to tell how to get $\varphi(G)$ from $G$.
All the automorphisms, that we know of at the moment, share a particular characteristic.
They are all, say, {\it local} in the following sense.
Roughly speaking, to get $\varphi(G)$ from $G$, one only needs to consider and modify the at most two element substructures of $G$ according to some given rule.

To make this clearer, we give an example. 
Let $\varphi(G)$ be the digraph that we get from $G$ such that we change the direction of the edges on those two element substructures of $G$ that have loops on both vertices.
It is easy to see that this defines an automorphism, indeed.
Perhaps, one would quickly discover the automorphism that gets $\varphi(G)$ by reversing all edges of $G$, but this is different.  
In this example, the modification of $G$ happens only locally, namely on 2-element substructures.
All the automorphisms, that we know of, share this property.

Now, we define some of our automorphisms, $\varphi_i$. 
We tell how to get $\varphi_i(G)$ from $G$. 
One of the most trivial automorphisms is
\begin{itemize}
\item $\varphi_1$: where there is a loop, clear it, and vice versa: to the vertices with no loop, insert one.
\end{itemize}
Observe that this automorphism operates with the 1-element substructures. 
Now we start to make use of the labels of Fig. \ref{41511}.
\begin{itemize}
\item $\varphi_2$: change the substructures (isomorphic to) $E$ to $E'$ and vice versa.
\item $\varphi_3$: change the substructures (isomorphic to) $L$ to $L'$ and vice versa.
\item $\varphi_4$: reverse the edges in the substructures (isomorphic to) $P$.
\item $\varphi_5$: reverse the edges in the substructures (isomorphic to) $Q$.
\end{itemize}
Let $S_4$ denote the symmetric group over the four-element set of digraphs $\{A,B,C,D\}$ (see Fig. \ref{41511}), and $\pi \in S_4$.
We define
\begin{itemize}
\item $\varphi_\pi$: change the substructures (isomorphic to) $X \in \{A,B,C,D\}$ to $\pi(X)$ (such that the loops remain in place).
\end{itemize}

Let $I$ denote the set of possible indices of our $\varphi$s, that is
$$I=\{1,\dots,  5\} \cup \{\pi \in S_4\}.$$
Though not hard to see, but to be precise, we need the following lemma.

\begin{lemma} Every $\varphi_i$ $(i\in I)$, defined above, is in fact an automorphism. 
\end{lemma}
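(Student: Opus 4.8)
The plan is to verify, for each $\varphi_i$ with $i \in I$, two things: first, that $\varphi_i$ is a well-defined bijection $\mathcal{D} \to \mathcal{D}$, and second, that both $\varphi_i$ and its inverse are order-preserving with respect to $\sqsubseteq$. Since each $\varphi_i$ is described as a ``local'' modification rule — altering only the $\leq 2$-element spanned substructures of a digraph according to a fixed prescription — the natural approach is to set up a single general lemma capturing why any such local rule yields an automorphism, and then check that each listed $\varphi_i$ fits the template.

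Concretely, I would first make the local-modification idea precise. For $n \in \{1,2\}$, fix a bijection $\sigma$ on the set of isomorphism types of $n$-vertex digraphs that commutes with the ``induced substructure on a vertex subset'' operation in the appropriate sense. Given such $\sigma$, define $\varphi_\sigma(G)$ on vertex set $V(G)$ by: for each subset $S \subseteq V(G)$ with $|S| = n$, replace the induced digraph $G[S]$ by $\sigma(G[S])$ (keeping track of which concrete relabeling is used so the pieces glue consistently — this is automatic for $n=1$, and for $n=2$ one must check that the loop-structure-preserving requirement makes the relabeling canonical). One shows $\varphi_\sigma$ is well defined on isomorphism types (an isomorphism $G \to H$ is still an isomorphism $\varphi_\sigma(G) \to \varphi_\sigma(H)$, because the rule only depends on induced substructures), that $\varphi_\sigma \circ \varphi_{\sigma'} = \varphi_{\sigma \circ \sigma'}$ and $\varphi_{\mathrm{id}} = \mathrm{id}$ (so $\varphi_\sigma$ is a bijection with inverse $\varphi_{\sigma^{-1}}$), and — the key point — that $G \sqsubseteq H$ implies $\varphi_\sigma(G) \sqsubseteq \varphi_\sigma(H)$: if $G \cong H[U]$ for some $U \subseteq V(H)$, then applying $\sigma$ subset-by-subset on the smaller vertex set $U$ agrees with the restriction to $U$ of applying it on $V(H)$, precisely because induced substructures of induced substructures are induced substructures. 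Running the same argument for $\varphi_{\sigma^{-1}}$ gives that $\varphi_\sigma$ is an order-isomorphism.

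Then I would verify that each of $\varphi_1, \varphi_2, \varphi_3, \varphi_4, \varphi_5$ and each $\varphi_\pi$ arises as a $\varphi_\sigma$ for a suitable $\sigma$ satisfying the hypotheses. For $\varphi_1$, $\sigma$ is the transposition swapping the loopless one-vertex digraph with the looped one; the compatibility condition is vacuous since there are no proper nonempty subsets of a singleton. For $\varphi_2,\dots,\varphi_5$ and $\varphi_\pi$, the map $\sigma$ permutes various $2$-vertex types ($E \leftrightarrow E'$; $L \leftrightarrow L'$; reversing the edge of $P$, i.e. swapping $P$ with its reverse — here one must note that $P$ and its reverse are \emph{the same} isomorphism type, so $\sigma$ fixes the type but acts nontrivially on representatives, which is why the ``keep the loops in place'' bookkeeping matters; similarly for $Q$) and $\pi$ permutes $\{A,B,C,D\}$. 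The compatibility-with-restriction condition reduces to: the one-vertex substructures (the loop/no-loop data at each vertex) are unchanged by $\sigma$. This is visibly true for all six families — $E,E',L,L',P,Q,A,B,C,D$ are only ever exchanged with digraphs having the same loop pattern on corresponding vertices — so each $\varphi_i$ inherits automorphism-hood from the general lemma.

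The main obstacle, I expect, is not conceptual but bookkeeping: making ``replace $G[S]$ by $\sigma(G[S])$ keeping the loops in place'' genuinely well-defined at the level of \emph{labeled} graphs, so that the local replacements on overlapping pairs $S$ are mutually consistent and patch together into a single digraph on $V(G)$. One has to argue that the required relabeling of each $\sigma(G[S])$ back onto $S$ is forced (up to something that does not affect the resulting edge set), using that the ``loops in place'' stipulation pins down the correspondence whenever the two endpoints of $S$ are distinguishable by their loops, and that when they are \emph{not} (both looped, or both loopless) the ambiguity is harmless because $\sigma$ either fixes the type or swaps the two directed edges symmetrically. Once this is nailed down, everything else — bijectivity via $\varphi_\sigma \varphi_{\sigma^{-1}} = \mathrm{id}$, and order-preservation via the restriction-compatibility — is routine.
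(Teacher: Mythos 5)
Your proposal is correct, and its core coincides with the paper's: both arguments rest on the observation that an embedding $\psi:G\to G'$ witnessing $G\sqsubseteq G'$ still witnesses $\varphi(G)\sqsubseteq\varphi(G')$, because the modification rule depends only on the isomorphism types of the ($\leq 2$)-element spanned substructures, which $\psi$ preserves. The differences are in packaging and in how the converse implication is obtained. You formalize a general ``local rule'' lemma and then check each $\varphi_i$ is an instance; the paper argues directly for each $\varphi_i$ at once, and is much terser about well-definedness (it simply asserts that each $\varphi_i$ is a bijection), whereas you spend real effort on the relabeling/``loops in place'' bookkeeping --- that care is a genuine plus, since the $P$- and $Q$-reversal maps fix an isomorphism type while acting nontrivially on labeled representatives. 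For the reverse direction $\varphi(G)\sqsubseteq\varphi(G')\Rightarrow G\sqsubseteq G'$, you apply the same local argument to $\varphi_{\sigma^{-1}}$, which is again a local rule; the paper instead exploits that each $\varphi_i$ has finite order (it uses $\varphi^{24}=\mathrm{id}$) and iterates the forward implication $23$ times. Both devices are valid; yours generalizes to local rules of infinite order, while the paper's avoids having to identify the inverse as a local rule at all. Your approach buys a reusable template (any loop-pattern-preserving permutation of the two-element types gives an automorphism), at the cost of having to state the compatibility-with-restriction hypothesis precisely; the paper's buys brevity.
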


\begin{proof}
For the sake of simplicity, let $\varphi=\varphi_i$ for this proof. 
It is easy to see that $\varphi$ is a bijection and $\varphi^{24}=\text{id}$ holds (24 being the least common multiple of their orders).
Therefore, we claim that it is enough to prove 
\begin{equation}\label{uuzr53}
G\sqsubseteq G' \;\Rightarrow\; \varphi(G) \sqsubseteq \varphi(G').
\end{equation}
This is because from $\varphi(G) \sqsubseteq \varphi(G')$, applying \eqref{uuzr53} 23 times, we obtain
$$\varphi(G) \sqsubseteq \varphi(G') \Rightarrow \varphi^2(G) \sqsubseteq \varphi^2(G') \Rightarrow \dots  \Rightarrow G=\varphi^{24}(G) \sqsubseteq \varphi^{24}(G')=G',$$
which is exactly the reverse direction of \eqref{uuzr53}. \\
Finally, to see that \eqref{uuzr53} holds, we show that an embedding $\psi: G \to G'$ demonstrating the fact $G\sqsubseteq G'$ demonstrates $\varphi(G) \sqsubseteq \varphi(G')$ as well. 
This is because all of our $\varphi_i$-s operate on two-element substructures of $G$, and for two-element digraphs it is easy to see that \eqref{uuzr53} holds.
\end{proof}

% Observe that, with the exception of $\varphi_1$, the automorphisms defined above do not touch loops (when getting $\varphi_i(G)$ from $G$).

Let $\langle \rangle$ stand for subgroup generation.
Let $\mathcal{C}=\langle\varphi_i : i\in I\rangle$.
Though, we conjecture that $\C=\aut\D$, now (without computer calculations), we only prove the following.

\begin{prop} The subgroup $\C\leq \aut\D$  is isomorphic to $(\mathbb{Z}_2^4 \times S_4)\rtimes_{\alpha} \mathbb{Z}_2$ (with the $\alpha$ defined around the equation (\ref{uzzrhuiwre})).
\end{prop}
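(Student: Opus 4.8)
The goal is to show that $\mathcal{C}=\langle \varphi_i : i\in I\rangle$ is isomorphic to $(\mathbb{Z}_2^4\times S_4)\rtimes_\alpha \mathbb{Z}_2$. I would begin by identifying the intended direct-product-of-$\mathbb{Z}_2$'s factor with the subgroup $N=\langle \varphi_2,\varphi_3,\varphi_4,\varphi_5\rangle$ (each $\varphi_j$ is an involution by the order computation in the previous lemma, $\varphi_j^2=\mathrm{id}$), the $S_4$ factor with $S=\langle \varphi_\pi : \pi\in S_4\rangle$, and the final $\mathbb{Z}_2$ with $\langle\varphi_1\rangle$. The heart of the argument is a collection of \emph{commutation and normality relations}, all of which reduce to inspecting how the generators act on the at-most-two-element substructures (this is exactly the ``local'' description emphasized in Section \ref{automorphisms}). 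Concretely I would verify: (i) $\varphi_2,\varphi_3,\varphi_4,\varphi_5$ pairwise commute, since they touch the four \emph{disjoint} pairs of digraph-types $\{E,E'\}$, $\{L,L'\}$, $\{P\}$, $\{Q\}$ and do nothing elsewhere; hence $N\cong\mathbb{Z}_2^4$. (ii) $\varphi_1$ (the loop-toggle on $1$-element substructures) commutes with $\varphi_4,\varphi_5$ (reversing edges on $P$, $Q$ is unaffected by loop status, and $P,Q$ carry no loops anyway), but \emph{conjugation by} $\varphi_1$ swaps $\varphi_2\leftrightarrow\varphi_3$ and, on the $S_4$ part, conjugates $\varphi_\pi$ to $\varphi_{\pi'}$ where $\pi'$ is $\pi$ read through the loop-toggling bijection of $\{A,B,C,D\}$ (loop-toggling on $2$-element substructures permutes $\{A,B,C,D\}$; one checks the induced map is a specific element $\tau$ of $S_4$, and conjugation by it is an inner automorphism of $S$). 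This is the content of the automorphism $\alpha$, and the equation the proposition refers to as (\ref{uzzrhuiwre}) is presumably the explicit description of $\alpha$ on generators.

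**Key steps in order.** First I would set $N=\langle\varphi_2,\varphi_3,\varphi_4,\varphi_5\rangle$ and prove $N\cong\mathbb{Z}_2^4$ by the disjoint-support observation above, noting that no nontrivial word in these four involutions can be the identity because each acts nontrivially on a type of digraph untouched by the others (e.g. test on the digraph consisting of a disjoint union of copies of $E$, $L$, $P$, $Q$, a $4$-element digraph). Second, $S=\langle\varphi_\pi:\pi\in S_4\rangle$: the map $\pi\mapsto\varphi_\pi$ is clearly a homomorphism $S_4\to\aut\D$ (composing the relabellings of $\{A,B,C,D\}$), and it is injective because $\varphi_\pi$ already acts as $\pi$ on the two-element digraphs themselves (sitting at level $2$ of $\D$), so $S\cong S_4$. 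Third, I would check $N\cap S=\{\mathrm{id}\}$ and $N\trianglelefteq NS$ (each $\varphi_\pi$ fixes $E,E',L,L',P,Q$ setwise — indeed pointwise as types — so conjugation by $S$ fixes each of $\varphi_2,\ldots,\varphi_5$); hence $NS=N\times S\cong\mathbb{Z}_2^4\times S_4$. Fourth, I would show $\varphi_1\notin NS$ (it changes loops, which nothing in $NS$ does) and that $\varphi_1$ normalizes $NS$ by computing the conjugates $\varphi_1\varphi_j\varphi_1$ for $j=2,3,4,5$ and $\varphi_1\varphi_\pi\varphi_1$; one records the resulting automorphism $\alpha$ of $\mathbb{Z}_2^4\times S_4$ (order $2$, swapping two of the $\mathbb{Z}_2$'s, conjugating $S_4$ by a fixed element, and — one must double-check — possibly twisting a further $\mathbb{Z}_2$ coordinate by a product depending on $\pi$). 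Finally, since $\langle\varphi_1\rangle\cong\mathbb{Z}_2$ meets $NS$ trivially and normalizes it, $\mathcal{C}=NS\rtimes\langle\varphi_1\rangle\cong(\mathbb{Z}_2^4\times S_4)\rtimes_\alpha\mathbb{Z}_2$, which has order $16\cdot24\cdot2=768$ as claimed.

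**Main obstacle.** The routine part is the disjoint-support bookkeeping; the genuinely delicate point is pinning down $\alpha$ exactly — in particular, understanding precisely how loop-toggling on $2$-element substructures permutes $\{A,B,C,D\}$ and therefore which inner automorphism of $S_4$ conjugation by $\varphi_1$ induces, and whether the conjugation $\varphi_1\varphi_\pi\varphi_1$ lands purely in $S$ or additionally picks up a factor from $N$ (because toggling loops can turn a type like $A$, which has one loop, into a type with a different loop-count, and the combined effect must be re-expressed in terms of the generators $\varphi_2,\ldots,\varphi_5,\varphi_\pi$). Getting the cocycle/twisting bookkeeping right here is what makes the semidirect product the specific $\rtimes_\alpha$ rather than a direct product, and it is also what forces one to verify carefully that $\alpha^2=\mathrm{id}$ so that the extension by $\mathbb{Z}_2$ is well defined. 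Everything else follows from the general fact that a group generated by a normal subgroup $H$ and an element $t$ of order $2$ with $t\notin H$ and $t H t^{-1}=H$ is $H\rtimes\langle t\rangle$.
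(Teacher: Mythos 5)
Your overall architecture is essentially the paper's: decompose $\mathcal{C}$ as an internal semidirect product of $\langle\varphi_2,\ldots,\varphi_5,\varphi_\pi\rangle$ (itself an internal direct product by disjoint-support arguments) with $\langle\varphi_1\rangle$, and read off the twisting automorphism $\alpha$ from the conjugation relations. The paper merely orders the steps differently, first splitting off $\langle\varphi_1\rangle$ and then decomposing the complement; your bookkeeping for $N\cong\mathbb{Z}_2^4$, $S\cong S_4$, $N\cap S=\{\mathrm{id}\}$, and $\varphi_1\notin NS$ all matches.

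However, your step (ii) contains a concrete error that propagates into your description of $\alpha$. You claim $\varphi_1$ commutes with $\varphi_4$ and $\varphi_5$ ``because $P,Q$ carry no loops anyway.'' But $Q$ has loops on both of its vertices (see Figure \ref{41511}); it is precisely the loop-toggle image of $P$. Consequently $\varphi_1\varphi_4\varphi_1=\varphi_5$ and $\varphi_1\varphi_5\varphi_1=\varphi_4$: conjugating the ``reverse edges on $P$-substructures'' map by the loop toggle yields the ``reverse edges on $Q$-substructures'' map. The correct $\alpha(1)$ therefore swaps \emph{two} pairs of $\mathbb{Z}_2$-coordinates, $(p,q,r,s,\pi)\mapsto(q,p,s,r,(BC)\pi(BC))$ as in (\ref{uzzrhuiwre}), not only the $\varphi_2\leftrightarrow\varphi_3$ pair as you assert when you record $\alpha$ as ``swapping two of the $\mathbb{Z}_2$'s.'' On the other hand, the complications you flag as possible do not occur: toggling loops induces exactly the transposition $(BC)$ on $\{A,B,C,D\}$ (it fixes the isomorphism types $A$ and $D$ and swaps $B$ with $C$), so $\varphi_1\varphi_\pi\varphi_1=\varphi_{(BC)\pi(BC)}$ lands purely in the $S_4$ factor with no extra contribution from $N$, and $\alpha(1)^2=\mathrm{id}$ is immediate. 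Once the relation for $\varphi_4,\varphi_5$ is corrected, the rest of your argument goes through and agrees with the paper's proof.
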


\begin{proof}
First, we prove that $\mathcal{C}$ splits into the internal semidirect product
\begin{equation}\label{xxcv7522236}
\mathcal{C}=\langle\varphi_i : i\in I\setminus\{1\}\rangle \rtimes \langle \varphi_1\rangle.
\end{equation}
Let $S=\langle\varphi_i : i\in I\setminus\{1\}\rangle$.
To prove \eqref{xxcv7522236}, we have to check that \\
1) $S$ is a normal subgroup of $\C$, \\
2) $S \cap \langle \varphi_1\rangle = \{\text{id}\}$, and \\
3) $S \langle \varphi_1\rangle = \C$.
%It is enough to check 1) for the generators of which there is only one outside $S$, namely $\varphi_1$.

To show 1), namely $\varphi_1 S \varphi_1^{-1}\subseteq S$, first observe $\varphi_1^{-1}=\varphi_1$.
Therefore what we need is $\varphi_1 \varphi_i \varphi_1 \in S$ for $s\in S$ and $i\in I\setminus\{1\}$.
The following are clear from the definition of $\varphi_i$:
\begin{equation} \label{ertr876jd}
\varphi_1 \varphi_2\varphi_1=\varphi_3, \;\;
\varphi_1 \varphi_3\varphi_1=\varphi_2, 
\;\; \text{ and analogously} \;\;
\varphi_1 \varphi_4\varphi_1=\varphi_5, \;\;
\varphi_1 \varphi_5\varphi_1=\varphi_4
\end{equation} 
Furthermore, as usual, let $(BC)$ be the cycle notation of the permutation of $S_4$ that takes $B$ to $C$ and vice versa.
If $\pi \in S_4$, and $\pi'=(BC)\pi(BC)$, then it is easy to check that 
\begin{equation}\label{kkfshdfe}    
\varphi_1 \varphi_\pi \varphi_1=\varphi_{\pi'}
\end{equation}
We are done with the proof of 1).

To prove 2), observe that the generators of $S$ do not touch loops, that is the loops in $\varphi_i(G)$ ($i\in I\setminus\{1\}$) are in the `same position' as in $G$. 
Hence there is no element of $S$ touching loops, which $\varphi_1$ clearly does. 

Let's turn to 3). Rearranging the equations \eqref{ertr876jd} and \eqref{kkfshdfe}, like $\varphi_1 \varphi_i=\varphi_i \varphi_1$, we can move all $\varphi_1$s to the right side of any expression in $S$, and that is all we need to do.

Furthermore, we state that S is an internal direct product
$$S=\langle\varphi_2 \rangle \times \langle\varphi_3\rangle \times \langle\varphi_4\rangle \times \langle\varphi_5\rangle \times \langle\varphi_\pi : \pi \in S_4\rangle.$$ 
Note that here, at the last factor, the subgroup generation is just a technicality as, clearly, the $\varphi_\pi$-s constitute a subgroup themselves.
Let us show
$$S=\langle\varphi_i : i\in \{2,3,4,5\}\rangle \times \langle\varphi_\pi : \pi \in S_4\rangle$$
first. 
This comes from the fact that the automorphisms of the first and the second factors touch different two-element substructures of the digraphs, hence their intersection is trivial and their elements commute as well.
The fact 
$$\langle\varphi_i : i\in \{2,3,4,5\}\rangle = \langle\varphi_2 \rangle \times \langle\varphi_3\rangle \times \langle\varphi_4\rangle \times \langle\varphi_5\rangle$$
comes from a very similar argument. \\
What we've proven is that $\mathcal{C}$ is isomorphic to
$$(\mathbb{Z}_2^4 \times S_4)\rtimes_{\alpha} \mathbb{Z}_2,$$
where $S_4$, again, denotes the symmetric group over the set $\{A,B,C,D\}$, and $\alpha$ is the following. 
Obviously, $\alpha(0)=\text{id}\in  \mathrm{Aut}(\mathbb{Z}_2^4 \times S_4)$.
To define $\alpha(1)$, let $p,q,r,s \in \{0,1\}$ and $\pi \in S_4$.
Then
\begin{equation} \label{uzzrhuiwre}
\alpha(1): (p,q,r,s,\pi) \mapsto (q,p,s,r, (BC)\pi(BC)).
\end{equation}
\end{proof}

The group $\C$ of our conjecture has 768 elements.
Even though we cannot prove that there are no more automorphisms beyond the ones in $\C$, we conjecture so.

\begin{sejt}\label{conjecture} $\aut\D \cong \C$, i. e. the automorphism group of the partial order $(\mathcal{D}; \sqsubseteq)$ is isomorphic to $(\mathbb{Z}_2^4 \times S_4)\rtimes_{\alpha} \mathbb{Z}_2$, with the $\alpha$ defined above (around (\ref{uzzrhuiwre})).
\end{sejt}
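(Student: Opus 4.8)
The plan is to turn Theorem \ref{982thss} into the single finite inequality $|\aut_3\D_4|\le 768$ and then to confirm that inequality by machine computation. For the reduction, note first that $\D_3$ is a union of levels, hence invariant under every automorphism of $\D_4$, so restriction to $\D_3$ is a group homomorphism $\aut\D_4\to\aut\D_3$ whose image is exactly $\aut_3\D_4$; likewise $\aut_3\D$ is the image of $\aut\D\to\aut\D_3$. Since an automorphism of $\D$ restricts to one of $\D_4$, we get a chain $\C|_3\le\aut_3\D\le\aut_3\D_4\le\aut\D_3$, where $\C|_3$ denotes the image of $\C$ under restriction. In Section \ref{automorphisms} it is shown that this restriction is injective, so $|\C|_3|=|\C|=768$. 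Hence, as soon as we know $|\aut_3\D_4|\le 768$, the whole chain collapses: $\aut_3\D_4=\C|_3\cong\C$ and $\aut_3\D=\aut_3\D_4$; moreover, since $\aut_3\D\le\aut_3\D_n\le\aut_3\D_4$ for every $n\ge 4$, the descending chain $\aut\D_3\ge\aut_3\D_4\ge\aut_3\D_5\ge\cdots$ is then constant from its second term on, which is exactly the assertion of the theorem.

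It remains to bound $|\aut_3\D_4|$ from above, and here a single level does not suffice: by (the proof of) Lemma \ref{extension}, $\aut\D_3$ is already enormous (about $1.67\cdot 10^{13}$ elements). The reason is that every digraph on exactly three vertices is maximal in $\D_3$, so two such digraphs with literally equal down-set in $\D_2$ may be permuted freely, while two whose $\D_2$-down-sets are matched by an automorphism of $\D_2$ lie in a common orbit. Thus $\aut\D_3$ is an extension whose quotient is the concrete $768$-element group $\C|_3$ and whose kernel — the ``free part'' — is a direct product of symmetric groups, one factor per block of level-$3$ digraphs sharing a $\D_2$-down-set. The content of the theorem is that level $4$ annihilates the free part entirely: a level-$3$ automorphism lifts to $\D_4$ precisely when it already belongs to $\C|_3$.

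To verify this I would have the computer (i) enumerate $\D_4$, i.e. the $3160$ isomorphism types of digraphs on at most four vertices together with the covering relation between consecutive levels; (ii) compute $\aut\D_4$ as a permutation group on these $3160$ points — a standard poset/graph-automorphism computation, and although $\aut\D_4$ (like $\aut\D_3$) is astronomically large, it is stored and handled routinely through a generating set; and (iii) compute the image of the action of $\aut\D_4$ on the set of digraphs on at most three vertices and read off its order, which the computation returns as $768$. As a consistency check one verifies that these $768$ restrictions are exactly the elements of $\C|_3$, recovering the group $(\mathbb{Z}_2^4\times S_4)\rtimes_\alpha\mathbb{Z}_2$ concretely.

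The main obstacle is making step (iii) both correct and convincing, since the groups involved are far too big to list element by element. The argument must exploit the product decomposition of $\aut\D_3$ (and of the kernel of $\aut\D_4\to\aut\D_3$, which merely permutes level-$4$ digraphs among themselves) to reduce the lifting question to a feasible family of checks: for each block of order-theoretically indistinguishable level-$3$ digraphs, one decides which of its symmetries survive once the level-$4$ digraphs lying above those digraphs are taken into account — naturally phrased as a bipartite matching problem on level-$4$ digraphs. Getting the bookkeeping of these blocks, their $\aut\D_2$-images, and the compatibility conditions with level $4$ exactly right is where the real work lies; everything else is the routine enumeration of $\D_4$ and the group-theoretic reduction above.
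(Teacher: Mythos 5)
There is a genuine gap, and it is not in the computation but in what the computation can possibly deliver. The statement you are asked to prove is the full conjecture $\aut\D\cong\C$, which the paper explicitly leaves open; the argument you outline establishes (at best) Theorem \ref{982thss}, namely $\aut_3\D=\aut_3\D_4\cong\C$. Even granting every step of your reduction and the machine verification of $|\aut_3\D_4|\le 768$, you have only controlled the \emph{restrictions} of automorphisms of $\D$ to the first three levels. To pass from $\aut_3\D\cong\C$ to $\aut\D\cong\C$ you would need the restriction homomorphism $\aut\D\to\aut\D_3$ to be injective, i.e.\ that an automorphism of $\D$ fixing the first three levels pointwise is the identity. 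No such statement is available: the only injectivity result in the paper is Proposition \ref{43426869}, which requires agreement on the first \emph{twelve} levels, and the paper notes in its final section that reducing $12$ to $3$ is only known to be possible \emph{if the conjecture is already true}. So your chain $\C|_3\le\aut_3\D\le\aut_3\D_4$ collapses the restricted groups but says nothing about a hypothetical $\varphi\in\aut\D$ that is the identity on $\D_3$ and nontrivial higher up. To close the argument along these lines one would have to compute $\aut_{12}\D_{13}$ (or sharpen Proposition \ref{43426869}), which the paper states is far out of computational reach.

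A secondary point: even for the part you can prove, your step (ii) --- ``compute $\aut\D_4$ as a permutation group on these $3160$ points'' and then take its image on $\D_3$ --- is optimistic as stated. The paper's route deliberately avoids computing $\aut\D_4$ or $\aut\D_3$ in full (the latter already has $\approx 1.67\cdot 10^{13}$ elements); instead it uses the Extension Lemma plus the invariants $\alpha\cap\beta$ and Conditions 1A--3B to cut the candidates fixing the first two levels down to $2^{20}$ and then to $4$. Your remark that the lifting question reduces to blockwise matching against level~$4$ is essentially the right idea and is close in spirit to the paper's Conditions 1A--3B, but as written it is a plan rather than a proof, and in any case it only yields the theorem, not the conjecture.
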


\section{The Extension Lemma}

Let $G\prec H$ mean that the digraph $H$ covers $G$ in the poset $(\mathcal{D}; \sqsubseteq)$.
Let $l(G)$ denote the set of lower covers $G$, that is 
$$l(G)=\{H\in \D: H\prec G\}.$$
When trying to prove our conjecture, probably the most natural thing to do would be to prove it for the bottom of the poset $\D$ (that is actually what we do in Theorem \ref{982thss}) and show that from a certain point upwards in $\D$, $l(G)$ is always unique. 
If this was the case, one can see that fixing an automorphism at the bottom would fix it everywhere and we would be done.
Is it true that for big enough $G, H\in \D$, $l(G)=l(H)$ implies $G=H$?
One might recognize that, in fact, this is some sort of reconstruction problem for directed graphs. 
Unfortunately reconstruction conjectures are hard and they tend to turn out false even with much stronger assumptions.
We will later see, with our computer calculations, that for the first four levels of our $\D$, such a reconstruction statement is far from being true.

There are 104 digraphs on the third level. To check $104!$ permutations is clearly out of reach, so brute force calculation is out of question even for automorphisms of the first three levels.
This makes us resort to more refined approaches.

Let's say we want to decide for a concrete $\varphi: \D_n\to \D_n$ if it is a member of $\aut\D_n$.
What can we do?
It is clear that automorphisms only move elements inside their levels and therefore it is enough to check if $\varphi$ is a bijection, leaves everyone on its level, and preserves the covering relation, that is for all $G, H \in \D_n$
\begin{equation}\label{sjfhdj8}
G\prec H\; \Longleftrightarrow \; \varphi(G)\prec\varphi(H)
\end{equation}
holds.
The problem with this is that it takes much calculation because there are many covering pairs.

We've already encountered two computational problems: the difficulty of having to check too many $\varphi$s and the time-consuming nature of checking even one.
It is time to introduce our medicine for these two problems, the so-called Extension Lemma. 
It describes whether an automorphism $\varphi\in \aut \D_n$ extends to an automorphism of $D_{n+1}$ in such a way that is easy to calculate by computers. 
Some technical notions have to be introduced first. \\
Let $s(G)$ denote the number of digraphs in $\D$ that share $l(G)$, that is
$$s(G)=|\{H\in \D: l(G)=l(H)\}|.$$
Let 
$$H_n=\{(l(G),s(G)):G\in \D, |V(G)|=n\}.$$
For a set $S\subseteq \D$ of digraphs, $\varphi(S)$ is just taken element-wise.
The pairs in $H_n$ are of the form $(S,k)$ where $S$ is a set of digraphs, and $k$ is a natural number. 
We define the action of $\varphi$ on such pairs as $\varphi(S,k)=(\varphi(S),k)$.
$\varphi(H_n)$ is, again, taken element-wise.

\begin{lemma}[Extension Lemma]\label{extension}
An automorphism $\varphi$ of $\D_n$ extends to $\D_{n+1}$ if and only if $\varphi(H_{n+1})=H_{n+1}$. 
Each of the extendible automorphisms has exactly
\begin{equation}\label{jnhztfgt}
    \prod_{(S,k)\in H_{n+1}}k!
\end{equation}
extensions.
\end{lemma}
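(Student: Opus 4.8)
The plan is to prove both implications by unwinding how the covering relation of $\D_{n+1}$ is assembled from that of $\D_n$. The first thing I would record is a pair of elementary observations about the grading. If $|V(H)|=n+1$ then every lower cover of $H$ lies on level $n$, and conversely every covering pair $G\prec H$ with $|V(H)|=n+1$ has $|V(G)|=n$; hence the set of covering pairs of $\D_{n+1}$ is the disjoint union of the set of covering pairs of $\D_n$ and $\{(G,H):|V(H)|=n+1,\ G\in l(H)\}$. Moreover, any digraph sharing its lower-cover set with a level-$(n+1)$ digraph $H$ is itself on level $n+1$ (take a common lower cover and count its vertices), so for $n+1\ge 2$ the number $s(H)$ counts exactly the members of the fibre $A_S:=\{K:|V(K)|=n+1,\ l(K)=S\}$ over $S:=l(H)$, and $(S,s(H))\in H_{n+1}$ with $s(H)=|A_S|$. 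Finally, since $\D_{n+1}$ is a finite poset, a bijection of it that preserves covering in the forward direction automatically preserves it backwards (an injection of the finite set of covering pairs into itself is onto), hence is an order-automorphism. Combining these remarks gives the working criterion: $\tilde\varphi\in\aut\D_{n+1}$ extends $\varphi$ if and only if $\tilde\varphi$ is a bijection of $\D_{n+1}$ with $\tilde\varphi|_{\D_n}=\varphi$ and $l(\tilde\varphi(H))=\varphi(l(H))$ for every $H$ on level $n+1$ (preservation of the within-$\D_n$ covers being automatic, and the displayed condition being exactly preservation of the level-$n$-to-$(n+1)$ covers).

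For the \emph{only if} direction, let $\tilde\varphi$ extend $\varphi$. For $H$ on level $n+1$ we have $l(\tilde\varphi(H))=\tilde\varphi(l(H))=\varphi(l(H))$. Since $\tilde\varphi$ restricts to a bijection of level $n+1$ and $\varphi$ is injective, $\tilde\varphi$ maps $\{K:l(K)=l(H)\}$ bijectively onto $\{K:l(K)=\varphi(l(H))\}$, so $s(\tilde\varphi(H))=s(H)$. Therefore $\varphi(l(H),s(H))=(\varphi(l(H)),s(H))=(l(\tilde\varphi(H)),s(\tilde\varphi(H)))\in H_{n+1}$, which gives $\varphi(H_{n+1})\subseteq H_{n+1}$; as $\varphi$ acts injectively on the finite set $H_{n+1}$, equality follows.

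For the \emph{if} direction, assume $\varphi(H_{n+1})=H_{n+1}$. The first coordinates of the pairs in $H_{n+1}$ are pairwise distinct (the multiplicity $s$ is a function of $l$), so $\varphi$ induces a permutation of the set $\mathcal{S}$ of lower-cover sets realised on level $n+1$, and for each $S\in\mathcal{S}$ the fibres $A_S$ and $A_{\varphi(S)}$ have the same cardinality $k$, where $(S,k)\in H_{n+1}$. I would then define $\tilde\varphi$ by $\tilde\varphi|_{\D_n}=\varphi$ together with, for each $S\in\mathcal{S}$, an arbitrarily chosen bijection $A_S\to A_{\varphi(S)}$. Because $\varphi$ permutes $\mathcal{S}$ and the $A_S$ partition level $n+1$, $\tilde\varphi$ is a bijection of $\D_{n+1}$, it restricts to $\varphi$ on $\D_n$, and by construction $l(\tilde\varphi(H))=\varphi(l(H))$ for every level-$(n+1)$ digraph $H$; by the criterion above, $\tilde\varphi\in\aut\D_{n+1}$ extends $\varphi$. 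For the count, any extension $\tilde\varphi$ must carry $A_S$ onto $A_{\varphi(S)}$ (since $l(\tilde\varphi(H))=\varphi(l(H))$ is forced), and conversely every such tuple of bijections yields an extension; hence the number of extensions equals $\prod_{S\in\mathcal{S}}|A_S|!=\prod_{(S,k)\in H_{n+1}}k!$.

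I do not expect a genuine obstacle here; the only place needing care is the bookkeeping in the first paragraph—pinning down that ``$\tilde\varphi$ is an automorphism'' collapses to ``$\tilde\varphi$ preserves lower-cover sets on the top level'', and recognising that $\varphi$ must match the multiplicities $s(\cdot)$ and not merely the family $\mathcal{S}$. This last point is exactly why the elements of $H_{n+1}$ are recorded as (set, number) pairs, and it is the source of the factor $k!$, which is the freedom of permuting each fibre $A_S$ among itself.
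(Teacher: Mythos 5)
Your proof is correct and follows essentially the same route as the paper: you partition level $n+1$ into fibres by lower-cover set (the paper's classes $p_1,\dots,p_k$), derive $\varphi(H_{n+1})\subseteq H_{n+1}$ from an extension in the forward direction, and in the reverse direction observe that extensions correspond exactly to choices of bijections between each fibre and its image fibre, yielding the product of factorials. You additionally spell out two details the paper leaves implicit (that forward preservation of covering by a bijection of a finite poset suffices, and that $s(G)$ effectively counts only level-$(n+1)$ digraphs), but the argument is the same.
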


Before its proof, let's see how we use this lemma in our calculations.
For example, let's determine $|\aut\D_3|$. 
Remember, there are 104 digraphs on the third level.
Again, the brute force calculation, which would check $104!\cdot 10!\cdot 2!$ maps (because there are 104, 10, and 2 digraphs on the first three levels), is clearly out of grasp, even for the strongest computer on earth.
Let us try to use the Extension Lemma.
By looking at Figure \ref{41511}, one can see that $|\aut\D_2|=2\cdot3!\cdot 4! \cdot 3!=1728$. 
It is fast for our programme to calculate $H_3$, it turns out to be a 74-element set.
And it is also fast to check the condition $\varphi(H_3)=H_3$ for all of the 1728 $\varphi$-s of $\aut\D_2$. 
It turns out that the condition is satisfied for 192 maps, meaning $|\aut_2\D_3|=192$.
Let's observe here that what we got means
$$\aut\D_2>\aut_2\D_3=\aut_2\D_4=\dots=\aut_2\D,$$
because the automorphisms unveiled in Section \ref{automorphisms} already show us 192 different restrictions to the first two levels. 
(This is explained in more detail in the proof of Theorem \ref{982thss}.)  
Our original question was to determine $|\aut\D_3|$.
The Extension Lemma answers that question as well for it says that each of the 192 automorphisms found has $\prod_{(S,k)\in H_{3}}k!$ automorphisms. 
Since we have $H_3$ at hand, it is easy to read out that the answer is
$$|\aut\D_3|=192 \cdot  (1!)^{52} \cdot (2!)^{18} \cdot (4!)^4\approx 1.67\cdot 10^{13}.$$
We now prove the Extension Lemma.

\begin{proof}
%First, we deal with the first part of the lemma.
The fact that $\varphi \in \D_n$ means that we only have to verify \eqref{sjfhdj8} when $G$ is on the $n$-th level and $H$ is on the $(n+1)$-th.
Let us partition the digraphs on the $(n+1)$-th level according to the set of their lower covers, that is $G$ and $G'$ gets placed in the same partition if and only if $l(G)=l(G')$. 
Let us denote the partitions we get by $p_1, \dots, p_k$. %where $p_i=\{G_1^i, \dots, G_{k_i}^i\}$.
Note that $(S,k)\in H_{n+1}$ if and only if there exists some $G\in p_i$ for which $l(G)=S$ and $|p_i|=k$. \\
First, we tackle the only if ($\Rightarrow$) direction of the first statement of the lemma. Suppose $\varphi$ extends to, say, $\varphi'\in \aut_n\D_{n+1}$. 
For $\varphi$ acts bijectively on the $n$-th level, it is injective on $H_{n+1}$, and, therefore, it is enough to show $\varphi(H_{n+1})\subseteq H_{n+1}$. 
Let us pick $(S,k)\in H_{n+1}$. As we saw before, there exists some $G\in p_i$ for which $l(G)=S$ and $|p_i|=k$.
It is clear that $\varphi(l(G))=l(\varphi'(G))$, and also $|\varphi'(p_i)|=|p_i|=k$. 
Therefore $\varphi(S,k)=(\varphi(S),k)\in H_{n+1}$, and that is what we wanted. \\
Now we show the if ($\Leftarrow$) direction, together with the formula \eqref{jnhztfgt}. Now we have a $\varphi\in \aut\D_n$ with the property $\varphi(H_{n+1})= H_{n+1}$, and we need to show that there are \eqref{jnhztfgt}-many extensions of it in $\aut_n\D_{n+1}$.
It is clear that $\varphi$ is bijective on $H_{n+1}$ and therefore induces a bijection $\psi$ on the set $\{p_1, \dots, p_k\}$ by  
$$\psi(p_i)=p_j \; \Longleftrightarrow \; (\forall G\in p_i)(\forall H\in p_j)( \varphi (l(G))=l(H)).$$
Note that the fact $\varphi(H_{n+1})= H_{n+1}$ makes this definition of $\psi$ correct.
It is easy to see that the extension $\varphi'$ of $\varphi$ is an automorphism if and only if
$$G\in p_i \Rightarrow \varphi'(G)\in \psi(p_i)$$
holds.
What this means is that there are as many extensions $\varphi'$ as permuting our digraphs inside the image of their own partition, $\psi(p_i)$, and that is exactly \eqref{jnhztfgt}.
\end{proof}

\section{The Proof of the Main Result}

In this section we prove our main result, Theorem \ref{982thss}.
Unfortunately, the Extension Lemma, in itself, is not enough to calculate the sought $|\aut_3\mathcal{D}_4|$ as it would require us to check $|\aut\D_3|\approx 1.67\cdot 10^{13} $ automorphisms, which is out of reach.
Hence, we must look for ways to exclude some inextendible elements of $\aut\D_3$. \\

\noindent{\it Proof of Theorem \ref{982thss}.}
Knowing the nature of our automorphisms it is clear that they all manifest on the first three levels---meaning that the automorphism group doesn't collapse when restricted to the first three levels. 
This fact already gives us $|\aut_3\D_4|\geq 768$, it only remains to be seen that $|\aut_3\D_4|\leq 768$.
In fact, observe that, even though the automorphisms $\varphi_4$ and $\varphi_5$ are both the identity restricted to the first two levels, the rest of the generators do manifest on the first two levels, independently of each other.
Hence the 192-element group
$\langle\varphi_i : i\in I\setminus\{4,5\}\rangle$ does not collapse on the first two levels.
This implies---using the usual orbit-stabilizer argument---that it is enough to show the following \\

\noindent {\it Claim.} There are at most four $\varphi \in \aut_3\D_4$ that fixes the first two levels. \\

\noindent That is what we are going to show in the rest of the proof. 
Now, we start to look for necessary conditions for $\varphi$ to extend. 
Let $G$ be a digraph on the third level.
$\varphi$ must preserve the lower covers of $G$ (for the first two levels are fixed) and the number of upper cowers of $G$ as well. 
Let's formalize this.
Let $u(G)$ and $l(G)$ denote the set of upper and lower covers of $G$ respectively.
%$H_3$ naturally induces an equivalence relation, say, $\alpha$ on the third level, namely 
Let us introduce two equivalence relations on the third level:
$$\alpha=\{(G,G'): l(G)=l(G'), \; |V(G)|=3\},$$
and 
$$\beta=\{(G,G'): |u(G)|=|u(G')|, \; |V(G)|=3\}.$$
Now, the observation we had above is
$(G, \varphi(G))\in \alpha \cap \beta$.
With computer calculation, it turns out that, beyond its one-element classes, the equivalence relation $\alpha \cap \beta$ has only 20 two-element classes.
This reduces the number of our possible $\varphi$-s to $2^{20}$, which is less but still too many to check brute-force.
Consequently, we have to keep reducing the number of possibilities.
As $\varphi_4$ and $\varphi_5$, two automorphisms of order two, leave the first two levels fixed, we expect to have two pairs, out of the 20 above, to move freely (even with the first two levels fixed). 
Reassuringly, this is what happens as one of the equivalence classes is $\{V,\varphi_4(V)\}$, and one other is $\{W, \varphi_5(W)\}$, where $V$ and $W$ are the two digraphs of Fig. \ref{fig:M}.

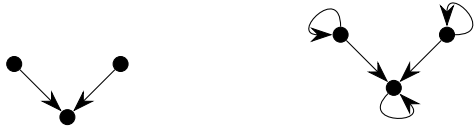
\begin{figure}[h]  
\centering 
\begin{tikzpicture}[main/.style = {draw, circle}] 
\node[My Style] (1) {}; 
\node[My Style] (2) [below right of=1] {};
\node[My Style] (3) [above right of=2] {};
\draw[-{Stealth[length=3mm, width=2mm]}] (1) -- (2);
\draw[-{Stealth[length=3mm, width=2mm]}] (3) -- (2);
\end{tikzpicture}% NO EMPTY LINE HERE!!!! 
\hspace{2cm}
\begin{tikzpicture} [main/.style = {draw, circle}] 
\node[My Style] (1) {}; 
\node[My Style] (2) [below right of=1] {};
\node[My Style] (3) [above right of=2] {};
\draw[-{Stealth[length=3mm, width=2mm]}] (1) -- (2);
\draw[-{Stealth[length=3mm, width=2mm]}] (3) -- (2);
\draw[-{Stealth[length=3mm, width=2mm]}] (1) to [out=90,in=180,looseness=10] (1);
\draw[-{Stealth[length=3mm, width=2mm]}] (2) to [out=225,in=315,looseness=10] (2);
\draw[-{Stealth[length=3mm, width=2mm]}] (3) to [out=0,in=90,looseness=10] (3);
\end{tikzpicture} 
\caption{The digraphs $V$ (left), and $W$ (right).} \label{fig:M}  
\end{figure}

Now, it only remains to show that if $\varphi$ fixes $V$ and $W$, then it is the identity. 
Actually, we only need to check for the remaining 18 two-element classes of $\alpha \cap \beta$ (beyond the two containing $V$ and $W$) that they are fixed.
Let $\{H, H'\}$ be one of the 18 classes.
Note that we already know that $|u(H)|=|u(H')|$.
We develop some necessary conditions for $\varphi(H)=H'$.
In a similar fashion to $u(G)$ and $l(G)$, let $u(G_1, G_2)$ and $l(G_1,G_2)$ denote the set of common upper and lower covers of $G_1$ and $G_2$, respectively.  \\ 
{\it Condition 1A.} $|u(H,V)|=|u(H',V)|$. \\
{\it Condition 1B.} $|u(H,W)|=|u(H',W)|$. \\
1--1 of the 18 pairs fail Condition 1A--1B.
For those pairs, $\varphi(H)=H'$ is impossible, therefore, let us suppose now that the pair $\{H, H'\}$ passes the Conditions 1A and 1B.
Then, for some $k$, $u(H,V)=\{H_{V,1}, \dots, H_{V,k}\}$ and $u(H',V)=\{H'_{V,1}, \dots, H'_{V,k}\}$. 
Analogously, let $u(H,W)=\{H_{W,1}, \dots, H_{W,l}\}$ and $u(H',W)=\{H'_{W,1}, \dots, H'_{W,l}\}$.\\ 
{\it Condition 2A.} The two systems of integers 
$$|l(H_{V,1})|, \dots, |l(H_{V,k})| \;\;\text{ and }\;\; |l(H'_{V,1})|, \dots, |l(H'_{V,k})|$$
must be the same. (Note that, in systems, the order of the elements does not matter.)\\
{\it Condition 2B.} The same as Condition 2A, with $W$ instead of $V$, and $m$ instead of $k$.\\
2--2 of the remaining 16(=18-2) pairs fail Condition 2A--2B. 
Having 12 pairs satisfying all of our conditions so far, we are going to need some more. The conditions so far have been obviously sufficient conditions for $\varphi$ to be an automorphism, but the following two might need a bit explanation, which we'll give after stating them.  \\
{\it Condition 3A.} Let us suppose that the integer $|l(H_{V,i})|$ appears only once in the system $|l(H_{V,1})|, \dots, |l(H_{V,k})|$, and $|l(H_{V,i})|=|l(H'_{V,j})|$. 
Then for all $F$ outside our 18 pairs, we have
$$F \in l(H_{V,i}) \; \Longleftrightarrow \; F \in l(H'_{V,j}).$$
{\it Condition 3B.} The same as Condition 3A, with $W$ instead of $V$, and $m$ instead of $k$. \\
The reason why these two conditions are sufficient is that, at this point, we know that everybody on the third level, with possibly the exception of our 18 pairs, is fixed by $\varphi$ and, by its unique number of lower covers, we know that $\varphi(H_{V,i})=H'_{V,j}$, and hence a fixed lower cover of $H_{V,i}$ must be under its image as well. \\
Fortunately, all remaining pairs fail either of these two final conditions, in an even 6--6 distribution. 
Therefore, we are done. \qed

\section{Computer Calculations}

The computer calculations are performed in the software R (version 2022.02.3). 
The entire code is avalaible at the authors' websites:\\
\url{http://www.math.u-szeged.hu/~nfanni/poset.R} \\
\url{http://www.math.u-szeged.hu/~akunos/publ/poset.R} \\
Graphs are represented as matrices, i.e a graph of $n$ vertices is an $n\times n$ matrix, the vertices are numbered, and the $(i,j)$-th element is either 0 or 1. It is 1 iff there is a directed edge from the $i$-th vertex to the $j$-th. (A loop is a 1 in the corresponding diagonal element.)

In order to perform the calculations described in the previous sections, we need to construct the second, third and fourth levels of the poset. 
There are $2^{n^2}$ 0--1 matrices describing digraphs of the $n$-th level. 
However, some represent the same digraph, up to isomorphism. Hence, first, we need to identify the isomorphism types.  
The second level is simple to construct, even by hand.
There are $2^{2^2}=16$ matrices giving us 10 isomorphism types at the end.
For the third and fourth levels, calculating by hand is out of question, we need a computer.
We detail the steps for the third level here, calculating the fourth level is analogous.
Note that the procedure heavily relies on the previous (second) level.

For brevity, let us call two matrices {\it isomorphic} if the digraphs they represent are.
Before going through our method, let us note that, theoretically, all we would have to do is the following.
Create a list for all non-isomorphic matrices, then take each $3\times 3$ matrix one-by-one, check if it is isomorphic to any matrix previously added to this list, and if it is not, then add it to the list. 
This sounds simple, however, especially for the fourth level, it is very time-consuming to do.
Indeed, each time we try to add a matrix, we need to compare it, and all the ones isomorphic to it, to all the previous matrices. 
That is a lot of comparing which takes a long time. 
Therefore, we add a twist to our method, namely we also group the matrices by a property which we will need to use later anyway: their set of lower covers. 
In practice, we build two lists simultaneously: one, called $level3$, to contain the non-isomorphic matrices in smaller sublists, where, the digraphs represented by the matrices (within the sublists) share their lower covers. 
These sets of lower covers are stored in another list ($below3$) indexed in parallel with the previous list. 
So, for example, $level3[i]$ contains some matrices that share their lower covers, stored in $below3[i]$. 
All the matrices stored in $level3$ are pairwise non-isomorphic.
In the end, we can take $level3$, remove the barriers between its smaller lists, and get a list of all the non-isomorphic matrices of the third level. Our method is as follows:

\begin{enumerate}
    \item We go through all the possible $2^{3^2}=512$ matrices. Suppose that we are currently investigating the $n$-th matrix, $A$, and let $G_A$ denote the corresponding digraph.    
    \item There are at most six matrices that represent the same digraph as $A$---those we get by permuting the three labels of the vertices.
    We can get these matrices using the six permutation matrices, $P_1,\dots,P_6$ (one of them being the identity matrix). 
    We create the set of all matrices that represent the same isomorphism type as $A$: 
    $$I(A):=\{P_1 A P_1^{\top},\dots,P_6 A P_6^{\top}\}.$$
    %Note that $A\in I(A)$.
    \item
    We want to find the $2\times 2$ matrices of the list $level2$ that represent graphs which are covered by $G_A$. 
    We make a set, $below_A$, of all $level2$ matrices that are the $2\times2$-sized upper left corners of a matrix of $I(A)$.
    \item
    If $n=1$, meaning that $A$ is the first matrix investigated, then we create a list of matrices, $level3$, with the first element $level3[1]:=A$.
    Also, we define $below3[1]:=below_A$. 
    %These are the matrices below $A$.
    \item
    Let $n>1$. Suppose that the list $level3$ already has $N\in \mathbb{N}$ entries (all lists themselves). Then we already have a part of the third level constructed. We check if for some $i\leq N$ we have $below3[i]=below_A$. 
    
    If such an $i$ exists, then we have two possibilities. 
    Either there is an element of $I(A)$ that is already in $level3[i]$, in which case we discard $A$. 
    Otherwise, we add $A$ to $level3[i]$, since it shares its lower cover with the matrices already there.
    
    If there is no such $i$, meaning that no previously investigated matrix has the same lower covers as $A$, we add a new entry to $level3$, namely, $level3[N+1]:=A$, along with $below3[N+1]:=below_A$. 
\end{enumerate}
After $n=512$ we have the third level built up in the following way. 
We have a list of lists, $level3$, which contains exactly one matrix representation for every non-isomorphic graph having 3 nodes. 
The sublists contain those $3 \times 3$ matrices that have the same lower covers. The lower covers, $2 \times 2$ matrices, are stored in another list of lists with a corresponding structure.

We construct the fourth level similarly, there the lower covers are $3\times 3$ matrices. 
%This manner is computation-effective. 
The fourth level is calculated in about 10 minutes, while the brute-force algorithm mentioned before would take days on the same computer.
Let us take a moment to understand why this method is that much faster than the brute-force one. 
The brute-force one starts relatively fast but the bigger part of the level already constructed, the slower it becomes. 
That is because it is very time-consuming to compare a $4\times4$ matrix and all its permutations to a long list of already stored matrices of the same size. 
Despite the fact that our method involves some comparisons of $4\times 4$ matrices as well, the trick is that the number of such comparisons is dramatically reduced. 
Technically, we use a single number to tag the matrices in the lists $level3$, $below3$ and their $
4$-dimensional counterparts.  
Therefore, when we have to find a matrix or a list of matrices in a long list, we are actually just matching simple numbers instead of matrices. 
Sometimes it is necessary to see the matrix form itself, but we can bring it up using its label nonetheless.

%{\color{red} $\to$ It is not clear why though. It seems that we are going through our lists the same number of times as in the original basic idea. There is an explanation due here.}

Now we are done with the harder part of the problem since we have built a structure that can easily be handled.
What remains to be done is to to find the pairs of $\alpha$ and $\beta$ and then check conditions 1A-3B, which is computationally pretty straightforward. 
The results we obtain were already delineated in the previous section.

\section{Future Prospects}

To prove the conjecture, at least in the fashion of the present paper, one should calculate $\aut_{12}\mathcal{D}_{13}$.
This we expect to be isomorphic to $\mathcal{C}$, and if verified, this would yield the conjecture with the use of Proposition \ref{43426869}.
The problem is that calculating even $\aut_{3}\mathcal{D}_{4}$ turned out to be quite compute-intensive, even though we didn't do it brute-force at all. 
The determination of $\aut_{4}\mathcal{D}_{5}$ might be in reach using some clever programming or preexisting programs (instead of our approach of building everything from scratch).
But that would still be very far from the yearned $\aut_{12}\mathcal{D}_{13}$. 

Another possible approach would be to try to reduce the number 12 in Proposition \ref{43426869}.
If the conjecture investigated is true, then 12 can be reduced to 3. 
One might try to have a look at the proof in  \cite{Ksubstructure} and attempt to reduce 12 in it.
We are sure this can be done, but it looks tedious.

To sum up, proving the conjecture does not seem trivial at this point, but in mathematics some bright new idea can always come and tame an ostensibly challenging problem.

\bibliographystyle{abbrv}
\bibliography{references}

\end{document}